\documentclass[twocolumn]{jsiamletters}
\usepackage{graphicx}
\usepackage{amsmath,amssymb,amsthm}
\usepackage{bm}
\newcommand{\p}{\partial}

\newtheorem{Def}{Definition}[section]
\newtheorem{Thm}[Def]{Theorem}
\newtheorem{Lem}[Def]{Lemma}
\newtheorem{Cor}[Def]{Corollary}

%
\group{
Algorithms for Matrix / Eigenvalue Problems and their Applications
}
%
%
%
%
%
%
%
\affiliation{The University of Electro-Communications}{1-5-1 Chofugaoka, Chofu, Tokyo, 182-8585, Japan}
%
\authorinfo{Kurumi Shiroma}{1}{yusaku.yamamoto@uec.ac.jp}
\authorinfo{Yusaku Yamamoto}{1}{yusaku.yamamoto@uec.ac.jp}
%
%
\title{Fixed-point analysis of Ogita-Aishima's symmetric eigendecomposition refinement algorithm for multiple eigenvalues}
%
%
\abstract{Recently, Ogita and Aishima proposed an efficient eigendecomposition refinement algorithm for the symmetric eigenproblem. Their basic algorithm involves division by the difference of two approximate eigenvalues, and can become unstable when there are multiple eigenvalues. To resolve this problem, they proposed to replace those equations that casue instability with different equations and gave a convergence proof of the resulting algorithm.  However, it is not straightforward to understand intuitively why the modified algorithm works, because it removes some of the necessary and sufficient conditions for obtaining the eigendecomposition. We give an answer to this question using Banach's fixed-point theorem.}
%
%
%
\keywords{symmetric eigenvalue problem, eigenvalue refinement, fixed-point theorem, convergence analysis, multiple eigenvalues}
%
%
%
%
\begin{document}

\maketitle

\section{Introduction}
Let $A\in\mathbb{R}^{n\times n}$ be a symmetric matrix with possibly multiple eigenvalues and $\tilde{X}\in\mathbb{R}^{n\times n}$ be an approximation to one of its orthogonal eigenvector matrices. We assume that $\tilde{X}$ is nonsingular. Then, there is a (non-unique) matrix $E\in\mathbb{R}^{n\times n}$ such that $X\equiv \tilde{X}(I_n+E)$, where $I_n$ is the identity matrix of order $n$, is an orthogonal eigenvector matrix of $A$. We call the problem of finding one of such $E$'s, or its approximation, the {\it symmetric eigendecomposition refinement} problem. Such a problem arises, for example, when one seeks to compute an eigendecomposition of $A$ in quadruple precision using the result computed in double precision arithmetic. Another application is solution of the time-dependent eigenvalue problem, where one seeks to compute the eigendecomposition of $A(t+\Delta t)$ using the eigendecomposition of $A(t)$, assuming that $A(t)$ and $A(t+\Delta t)$ is only slightly different.

Recently, Ogita and Aishima proposed an efficient algorithm for the symmetric eigendecomposition refinement problem that is applicable when $\tilde{X}$ is sufficiently accurate (that is, when there exists $E$ of sufficiently small norm) \cite{Ogita18}. The basic idea of their algorithm is simple. First, we consider necessary and sufficient conditions for the above $X$ to be an orthogonal eigenvector matrix of $A$: (i) $X^{\top}X=I_n$ and (ii) $D\equiv X^{\top}AX$ is diagonal. Then we rewrite these conditions in terms of $E$ and ignore second order terms in $E$. By solving the resulting linear simultaneous equations, we can easily obtain an approximation to $E$ and thus construct a better approximation to $X$. This process is repeated until the approximate orthogonal eigenvector matrix is sufficiently accurate. In \cite{Ogita18}, it is shown that the algorithm converges quadratically if $\tilde{X}$ is sufficiently accurate. Moreover, the algorithm is rich in matrix-matrix multiplications and therefore can be executed very efficiently on high performance architectures.

When $A$ has multiple eigenvalues, however, the simple algorithm described above no longer works. This is because the solution of the linear simultaneous equations for $E$ involves a division by a difference of approximate eigenvalues of $A$ (which are computed in the algorithm), and this causes division by zero or by a very small quantity when there are multiple eigenvalues. This reflects the indeterminacy of $E$ (the freedom of rotation within the eigenspace corresponding to multiple eigenvalues) in the case of multiple eigenvalues. To resolve the problem, Ogita and Aishima propose to introduce an additional constraint that determine $E$ uniquely and instead remove those equations that can cause division by zero or by a very small quantity. See subsection \ref{multiple_eigenvalues} for details. As a result, the potential instability of the algorithm is cured and the modified algorithm is shown to converge ultimately quadratically even in the presence of multiple eigenvalues. Thus, from a mathematical point of view, the problem has been completely resolved.

However, it is not straightforward to understand {\it intuitively} why the modified algorithm works. The equations removed from the modified algorithm constitute part of the necessary and sufficient conditions for $X=\tilde{X}(I_n+E)$ to be an orthogonal eigenvector matrix of $A$. Then, how can it be justified to remove them? Of course, it is justified because the resulting algorithm is proved to converge to a solution. But, it seems to be an {\it indirect} answer, although mathematically correct. A more direct explanation will facilitate deeper understanding of the algorithm. In this paper, we try to answer this question using Banach's fixed-point theorem. Note that the present paper is focused on multiple eigenvalues and not on clustered eigenvalues, which pose a more subtle problem. Consult \cite{Ogita19,Shiroma19} for recent advances on this topic.

In the following, we use the term {\it eigenvector matrix} to denote an orthogonal eigenvector matrix. $\|\cdot\|_2$ and $\|\cdot\|_F$ denote the 2-norm and the Frobenius norm of a matrix, respectively, while $\|\cdot\|_2$ is used to denote the Euclidean norm of a vector.

\section{Ogita-Aishima's algorithm}
\subsection{The basic algorithm}
In this section, we describe Ogita-Aishima's eigen-decomposition refinement algorithm, starting with its basic version. Let $A$ and $\tilde{X}$ be as defined in the Introduction. We make the following definition.
\begin{Def}
Let $\mathcal{X}$ be the set of eigenvector matrices of $A$. We define the set $\mathcal{E}_0$ of $n\times n$ matrices by
\begin{equation}
\mathcal{E}_0=\{E\in\mathbb{R}^{n\times n} \,|\,\tilde{X}(I_n+E)\in\mathcal{X}, \|E\|_2<1/3\}.
\end{equation}
\end{Def}
We assume $\mathcal{E}_0\ne\emptyset$ and seek for a matrix $E\in\mathcal{E}_0$ and the corresponding eigenvector matrix $X$. To this end, we substitute the equation $X=\tilde{X}(I+E)$ into $X^{\top}X=I_n$ and $D\equiv X^{\top}AX$, obtaining the following equations.
\begin{align}
(I_n+E)^{\top}\tilde{X}^{\top}\tilde{X}(I_n+E) &= I_n, \label{eq:OA1} \\
(I_n+E)^{\top}\tilde{X}^{\top}A\tilde{X}(I_n+E) &= D \label{eq:OA2}.
\end{align}
Note that $D$ is also an unknown matrix to be determined from (\ref{eq:OA1}) and (\ref{eq:OA2}). Apparently, its diagonal elements are the eigenvalues of $A$. After some manipulation, these equations can be rewritten as follows \cite{Ogita18}.
\begin{align}
E + E^{\top} &= R+\Delta_1(E), \label{eq:OA3} \\
D-DE-E^{\top}D &= S+\Delta_2(E,D), \label{eq:OA4}
\end{align}
where $R\equiv I_n-\tilde{X}^{\top}\tilde{X}$ and $S\equiv \tilde{X}^{\top}A\tilde{X}$ are constant matrices and $\Delta_1$ and $\Delta_2$ are matrix functions defined for an arbitrary square matrix $E\in\mathbb{R}^{n\times n}$ with $\|E\|_2<1$ and a diagonal matrix $D\in\mathbb{R}^{n\times n}$ as
\begin{align}
\Delta_1(E)  &\equiv \Delta(E)+(\Delta(E))^{\top} \nonumber \\
& \quad +(E-\Delta(E))^{\top}(E-\Delta(E)), \label{eq:Delta1definition} \\
\Delta_2(E,D)  &\equiv -D\Delta(E)-(D\Delta(E))^{\top} \nonumber \\
& \quad-(E-\Delta(E))^{\top}D(E-\Delta(E)), \label{eq:Delta2definition} \\
\Delta(E) &\equiv (I+E)^{-1}-I+E = \sum_{k=2}^{\infty}(-E)^k. \label{eq:Deltadefinition}
\end{align}
Note that $\Delta_1(E)$ and $\Delta_2(E,D)$ contain only quadratic or higher order terms in $E$.

To find $E$ and $D$ that satisfy (\ref{eq:OA3}) and (\ref{eq:OA4}), Ogita and Aishima introduce an approximation by ignoring the second order terms $\Delta_1(E)$ and $\Delta_2(E,D)$. Let us write the solution of these approximated equations as $\tilde{E}=(e_{ij})$ and $\tilde{D}={\rm diag}(\tilde{d}_1,\ldots,\tilde{d}_n)$. Then we have
\begin{align}
\tilde{e}_{ij}+\tilde{e}_{ji} &= r_{ij}, \label{eq:fij1} \\
\tilde{d}_i\delta_{ij}-\tilde{d}_i\tilde{e}_{ij}-\tilde{d}_j\tilde{e}_{ji} &= s_{ij}, \label{eq:fij2}
\end{align}
for $i,j=1,\ldots, n$. By considering the case of $i=j$ in (\ref{eq:fij1}) and (\ref{eq:fij2}), we easily get
\begin{equation}
\tilde{e}_{ii}=\frac{r_{ii}}{2}, \quad \tilde{d}_i=\frac{s_{ii}}{1-r_{ii}}.
\label{eq:ditilde}
\end{equation}
Furthermore, if $\tilde{d}_i\ne\tilde{d}_j$ for $i\ne j$, we have
\begin{equation}
\tilde{e}_{ij} = \frac{s_{ij}+\tilde{d}_j r_{ij}}{\tilde{d}_j-\tilde{d}_i} \quad (i\ne j). \label{eq:fij}
\end{equation}
Hence, once $R$ and $S$ have been computed by matrix-matrix multiplications, all the elements of $\tilde{E}$ and $\tilde{D}$ can be computed in $O(n^2)$ work. This makes Ogita-Aishima's algorithm a very efficient method for eigendecomposition refinement.

\subsection{The case of multiple eigenvalues}
\label{multiple_eigenvalues}
When $A$ has multiple eigenvalues, however, $\tilde{d}_j-\tilde{d}_i$ in the denominator of (\ref{eq:fij}) can become zero or arbitrarily small. Thus, division by zero or by a very small quantity can occur, causing breakdown or instability of the algorithm. This reflects the indeterminacy of $X$ (and therefore of $E$) in the presence of multiple eigenvalues.

To resolve the problem, Ogita and Aishima introduce an additional constraint on $E$. Their solution is based on Theorem \ref{Theorem_unique_F} below. Assume that $A$ has distinct eigenvalues $\lambda_1, \lambda_2, \ldots, \lambda_m$ with multiplicity $n_1, n_2, \ldots, n_k$, respectively. For $E\in\mathcal{E}_0$, let $X=\tilde{X}(I+E)$ and denote by $I_k$ the set of indices of the $n_k$ column vectors of $X$ that are eigenvectors belonging to $\lambda_k$. Then it can be shown that the sets $I_1, I_2, \ldots, I_m$ are determined uniquely independent of the choice of $E$ (\cite{Shiroma19}, Theorem 2.2). Now we define a superset $\mathcal{E}$ of $\mathcal{E}_0$ as a set of matrices $E^{\prime}$ such that $X^{\prime}\equiv\tilde{X}(I+E^{\prime})\in\mathcal{X}$ and the index sets $I_1^{\prime}, I_2^{\prime}, \ldots, I_m^{\prime}$ of $X^{\prime}$ are identical to $I_1, I_2, \ldots, I_m$. Then we have the following theorem \cite{Ogita18,Shiroma19}.
\begin{Thm}
\label{Theorem_unique_F}
Assume that $\mathcal{E}_0\ne\emptyset$. Then, there exists a unique matrix $F^*\in\mathbb{R}^{n\times n}$ such that
\begin{itemize}
\item[(i)] $F^*\in\mathcal{E}$,
\item[(ii)] $\|F^*\|_2 < 3\min_{E\in\mathcal{E}_0}\|E\|_2 < 1$,
\item[(iii)] The $m$ principal submatrices $F_{I_1 I_1}^*, F_{I_2 I_2}^*, \ldots, F_{I_m I_m}^*$ of $F^*$ are all symmetric,
\end{itemize}
where $F_{I_k I_k}^*$ is an $n_k\times n_k$ principal submatrix of $F^*$ consisting of rows and columns with indices in $I_k$.
\end{Thm}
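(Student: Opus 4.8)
The plan is to construct $F^*$ explicitly: start from a norm-minimizing $\hat{E}\in\mathcal{E}_0$ and rotate within each eigenspace by the orthogonal factor of a polar decomposition, chosen so as to symmetrize the diagonal blocks. Properties (i) and (iii) will then hold by construction, the norm bound (ii) will follow by splitting $\hat{E}$ into its block-diagonal and off-block-diagonal parts, and uniqueness will follow from uniqueness of the polar decomposition. Two preliminary facts are needed. (a) The minimum $\mu:=\min_{E\in\mathcal{E}_0}\|E\|_2$ is attained: $\tilde{X}^{-1}\mathcal{X}-I_n$ is compact (a continuous image of the compact set $\mathcal{X}\subset O(n)$), $\|\cdot\|_2$ attains its minimum there, and since $\mathcal{E}_0\ne\emptyset$ that minimum is $<1/3$, hence realized by some $\hat{E}\in\mathcal{E}_0$. (b) Two eigenvector matrices $X,X'$ with the same index sets $I_1,\dots,I_m$ satisfy $X'=XQ$ with $Q$ orthogonal and block diagonal w.r.t. $\{I_k\}$ -- because $Q:=X^{\top}X'$ commutes with $X^{\top}AX=X'^{\top}AX'$, which is diagonal with all $I_k$-indexed entries equal to the distinct value $\lambda_k$ -- and conversely any such $Q$ carries an eigenvector matrix with index sets $\{I_k\}$ to another one, since it commutes with that diagonal of eigenvalues.

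For existence, put $X:=\tilde{X}(I_n+\hat{E})$ and $B_k:=(I_n+\hat{E})_{I_kI_k}=I_{n_k}+\hat{E}_{I_kI_k}$; since $\|\hat{E}_{I_kI_k}\|_2\le\|\hat{E}\|_2<1$, $B_k$ is nonsingular, with a left polar decomposition $B_k=P_kU_k$ ($P_k$ symmetric positive definite, $U_k$ orthogonal). Set $Q:=\mathrm{diag}(U_1^{\top},\dots,U_m^{\top})$ (block diagonal w.r.t. $\{I_k\}$) and $F^*:=(I_n+\hat{E})Q-I_n$. By fact (b), $\tilde{X}(I_n+F^*)=XQ\in\mathcal{X}$ with index sets $I_1,\dots,I_m$, so $F^*\in\mathcal{E}$, i.e. (i); and since $Q$ is block diagonal, $(I_n+F^*)_{I_kI_k}=B_kU_k^{\top}=P_k$, so $F^*_{I_kI_k}=P_k-I_{n_k}$ is symmetric, i.e. (iii). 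For (ii), write $\hat{E}=\hat{E}_{\mathrm{bd}}+\hat{E}_{\mathrm{od}}$; then $I_n+F^*=(I_n+\hat{E}_{\mathrm{bd}})Q+\hat{E}_{\mathrm{od}}Q=\mathrm{diag}(P_1,\dots,P_m)+\hat{E}_{\mathrm{od}}Q$, so
\[
\|F^*\|_2\le\max_k\|P_k-I_{n_k}\|_2+\|\hat{E}_{\mathrm{od}}\|_2\le\|\hat{E}_{\mathrm{bd}}\|_2+\|\hat{E}_{\mathrm{od}}\|_2\le 3\mu ,
\]
using $\|P_k-I_{n_k}\|_2\le\|B_k-I_{n_k}\|_2\le\|\hat{E}\|_2$ (Weyl's perturbation bound for singular values), $\|\hat{E}_{\mathrm{bd}}\|_2=\max_k\|\hat{E}_{I_kI_k}\|_2\le\|\hat{E}\|_2=\mu$, and $\|\hat{E}_{\mathrm{od}}\|_2\le\|\hat{E}\|_2+\|\hat{E}_{\mathrm{bd}}\|_2\le 2\mu$; since $\mu<1/3$ this also gives $\|F^*\|_2<1$. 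To upgrade $\le 3\mu$ to $<3\mu$ (the trivial case $\mu=0$ aside) I would sharpen the off-block-diagonal estimate, e.g. from $\hat{E}_{\mathrm{od}}=\frac1m\sum_{j=1}^{m-1}\bigl(\hat{E}-V^{j}\hat{E}V^{-j}\bigr)$ with $V=\mathrm{diag}(I_{n_1},\omega I_{n_2},\dots,\omega^{m-1}I_{n_m})$, $\omega=e^{2\pi\mathrm{i}/m}$, which yields $\|\hat{E}_{\mathrm{od}}\|_2\le\tfrac{2(m-1)}{m}\mu<2\mu$.

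For uniqueness, suppose $F^*$ and $\hat{F}$ both satisfy (i)--(iii). By (i) and fact (b), $\tilde{X}(I_n+\hat{F})=\tilde{X}(I_n+F^*)Q$ for some block-diagonal orthogonal $Q=\mathrm{diag}(Q_1,\dots,Q_m)$, hence $(I_n+\hat{F})_{I_kI_k}=(I_n+F^*)_{I_kI_k}Q_k$ for each $k$. By (iii) the two submatrices appearing here are symmetric, and by (ii) each differs from $I_{n_k}$ by a matrix of norm $<3\mu<1$, so both are positive definite; thus $(I_n+F^*)_{I_kI_k}=\bigl[(I_n+\hat{F})_{I_kI_k}\bigr]Q_k^{\top}$ exhibits an invertible positive definite matrix as (positive definite)$\,\times\,$(orthogonal), and uniqueness of the polar decomposition forces $Q_k^{\top}=I_{n_k}$; hence $Q=I_n$ and $\hat{F}=F^*$. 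The step I expect to be the main obstacle is the norm bound (ii): one must see that the polar correction affects only the block-diagonal part of $\hat{E}$ and does not enlarge it, control the off-block-diagonal part, and squeeze the constant down to $3$ -- strictly below $3\mu$; by contrast the attainment of the minimum, the block-diagonal-orthogonal description of $\mathcal{E}$, and the polar-uniqueness argument are comparatively routine.
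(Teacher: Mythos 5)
The paper does not actually contain a proof of Theorem~2.2: it is stated with citations to [Ogita18] and [Shiroma19], so there is no internal proof to compare against. Your reconstruction therefore has to stand on its own, and it does — the core ideas are right and the argument is essentially complete.

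Specifically: fact (b) (the block-diagonal orthogonal freedom within $\mathcal{E}$, via $Q=X^{\top}X'$ commuting with the diagonal eigenvalue matrix) is correct and is exactly the structural lemma one needs; the construction $F^*=(I_n+\hat{E})Q-I_n$ with $Q$ built from the orthogonal factors of the \emph{left} polar decompositions $B_k=P_kU_k$ of the diagonal blocks correctly forces $(I_n+F^*)_{I_kI_k}=P_k$, giving (i) and (iii) by inspection; the Weyl bound $\|P_k-I_{n_k}\|_2\le\|B_k-I_{n_k}\|_2$ is correct since the eigenvalues of $P_k$ are the singular values of $B_k$; and the uniqueness argument — expressing one SPD block as (SPD)$\times$(orthogonal) and invoking uniqueness of the polar factorization to force $Q_k=I_{n_k}$ — is clean and correct. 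The attainment of $\mu$ via compactness of $\tilde{X}^{-1}\mathcal{X}-I_n$ is also fine, and the averaging identity
$\hat{E}_{\mathrm{od}}=\frac1m\sum_{j=1}^{m-1}\bigl(\hat{E}-V^{j}\hat{E}V^{-j}\bigr)$ with $V$ the block-scalar unitary does give $\|\hat{E}_{\mathrm{od}}\|_2\le\frac{2(m-1)}{m}\mu$, hence $\|F^*\|_2\le\frac{3m-2}{m}\mu<3\mu$ whenever $\mu>0$.

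The one genuine loose end you already flag: when $\mu=0$ the strict inequality $\|F^*\|_2<3\mu$ is vacuous in the wrong direction (your construction gives $F^*=0$, so $\|F^*\|_2=3\mu=0$). That is a defect of the statement rather than of your argument — the referenced papers presumably either assume $\tilde{X}\notin\mathcal{X}$ or state the bound non-strictly — but it is worth noting explicitly rather than merely setting aside, since as written condition (ii) cannot be met. Aside from that, the proof is correct and, in my view, is the natural one: the polar decomposition is precisely the device that symmetrizes a block while staying in the orthogonal orbit, and the $3$ in the bound comes out of the split into the block-diagonal part (bounded by $\mu$ via Weyl) and the off-block-diagonal part (bounded by $2\mu$, or your sharper $\frac{2(m-1)}{m}\mu$).
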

Hence, if we choose to seek for this $F^*$, we can avoid the arbitrarity arising from multiple eigenvalues.

Based on this idea, Ogita and Aishima propose to use the symmetry condition of (iii) {\it instead of} part of the condition (\ref{eq:fij2}). To be concrete, let $\mathcal{K}(i)$ be the index of the eigenvalue ($1\le \mathcal{K}(i) \le m$) to which the $i$th column vector of $X^*$ belongs. Ogita and Aishima propose to replace (\ref{eq:fij1}) and (\ref{eq:fij2}) with the following equations.
\begin{align}
\tilde{f}_{ij}+\tilde{f}_{ji} &= r_{ij} \quad (i,j=1, \ldots, n), \label{eq:fij3} \\
\tilde{d}_i\delta_{ij}-\tilde{d}_i\tilde{f}_{ij}-\tilde{d}_j\tilde{f}_{ji} &= s_{ij} \quad (\mathcal{K}(i)\ne\mathcal{K}(j) {\rm \;or\;} i=j), \label{eq:fij4} \\
\tilde{f}_{ij} &= \tilde{f}_{ji} \quad (\mathcal{K}(i)=\mathcal{K}(j)). \label{eq:fij5}
\end{align}
Then, instead of (\ref{eq:fij}), we have
\begin{align}
\tilde{f}_{ij} &= \frac{s_{ij}+\tilde{d}_j r_{ij}}{\tilde{d}_j-\tilde{d}_i} \quad (\mathcal{K}(i)\ne\mathcal{K}(j)), \label{eq:fij6} \\
\tilde{f}_{ij} &= \frac{r_{ij}}{2} \quad (\mathcal{K}(i)=\mathcal{K}(j)). \label{eq:fij7}
\end{align}
In the actual algorithm, since it is difficult to judge from computed quantities whether $\mathcal{K}(i)=\mathcal{K}(j)$ or not, they instead switch between (\ref{eq:fij6}) and (\ref{eq:fij7}) depending on whether $|\tilde{d}_j-\tilde{d}_i|<\delta_1$ or not, for some threshold $\delta_1$. Thus, division by a quantity smaller than $\delta_1$ is avoided.


\subsection{The question treated in this paper}
While this solution looks nice, it raises a natural question. In the modified algorithm, (\ref{eq:fij2}) for the case of $\mathcal{K}(i)=\mathcal{K}(j)$ and $i\ne j$ is {\it ignored} and (\ref{eq:fij5}) is used instead. However, the former is part of the necessary and sufficient condition for $X^*=\tilde{X}(I+F^*)$ to be an eigenvector matrix of $A$. Then, how can it be justified to ignore it?

Ogita and Aishima answer this question by proving that the modified algorithm delivers correct $F^*$ even though it ignores (\ref{eq:fij2}) partly. From a mathematical point of view, this gives a complete answer. However, from the proof, it is difficult to understand intuitively why this omission is justified. In the next section, we seek to give a more direct answer to this question.

\section{Fixed-point analysis of Ogita-Aishima's algorithm}
\subsection{The idea}
To answer the question, we go back to the {\it exact} equations (with quadratic and higher order terms) that define $F^*$ and $D^*$, which can be written as follows.
\begin{align}
f_{ij}^*+f_{ji}^* &= r_{ij}+(\Delta_1(F^*))_{ij} \nonumber \\
& \quad\quad (i,j=1, \ldots, n), \label{eq:fij8} \\
d_i^*\delta_{ij}-d_i^* f_{ij}^*-d_j^*f_{ji}^* &= s_{ij}+(\Delta_2(F^*,D^*))_{ij} \nonumber \\
& \quad\quad(i,j=1, \ldots, n), \label{eq:fij9} \\
f_{ij}^* &= f_{ji}^* \quad (\mathcal{K}(i)=\mathcal{K}(j)). \label{eq:fij10}
\end{align}
Note that the index pair $(i,j)$ runs over {\it all} possible pairs in (\ref{eq:fij9}). Now, we restrict the range of $(i,j)$ in (\ref{eq:fij9}), as in Ogita-Aishima's modified algorithm.
\begin{align}
d_i^*\delta_{ij}-d_i^* f_{ij}^*-d_j^* f_{ji}^* &= s_{ij}+(\Delta_2(F^*,D^*))_{ij} \nonumber \\
& (\mathcal{K}(i)\ne\mathcal{K}(j) {\rm \;or\;} i=j), \label{eq:fij11}
\end{align}
In the following, we try to justify solving (\ref{eq:fij8}), (\ref{eq:fij11}) and (\ref{eq:fij10}) (which we call {\bf problem P'}) instead of (\ref{eq:fij8}), (\ref{eq:fij9}) and (\ref{eq:fij10}) (which we call {\bf problem P}).

Our strategy is as follows. For convenience, let us express the pair $(F,D)$ by an $(n^2+n)$-dimensional vector ${\bm z}$. We denote the vector corresponding to $(F^*,D^*)$ by ${\bm z}^*$ and an closed sphere in $\mathbb{R}^{n^2+n}$ with center ${\bm z}^*$ and radius $\delta$ by $B_{\delta}({\bm z}^*)$. Now, suppose that there exists some $\delta>0$ such that the problem {\bf P'} has a unique solution ${\bm z}^{\prime *}$ in $B_{\delta}({\bm z}^*)$. Then, ${\bm z}^{\prime *}={\bm z}^*$, since otherwise {\bf P'} would have two solutions in $B_{\delta}({\bm z}^*)$ (Note that the solution of {\bf P} is also a solution of {\bf P'}). This means that if we solve {\bf P'} near the true solution of {\bf P}, we exactly get the true solution ${\bm z}^*$ of {\bf P}, justifying solving {\bf P'} instead of {\bf P}. Thus the problem is reduced to finding such $\delta$.

\subsection{Construction of a contraction mapping on $B_{\delta}({\bm z}^*)$}
To show the existence of $\delta>0$ defined in the previous subsection, we resort to Banach's fixed-point theorem. To this end, we first rewrite (\ref{eq:fij8}), (\ref{eq:fij11}) and (\ref{eq:fij10}) as a mapping from $\mathbb{R}^{n^2+n}$ to itself. Let $\eta\equiv\min_{k\ne \ell}|\lambda_k-\lambda_{\ell}|$ and assume that $\delta<\frac{\eta}{2}$. Then, if ${\bm z}\in B_{\delta}({\bm z^*})$, for the pair $(F, D)$ corresponding to ${\bm z}$, $|d_j-d_i|\ge\eta-2\delta>0$ when $\mathcal{K}(i)\ne\mathcal{K}(j)$. Thus, we can rewrite (\ref{eq:fij8}), (\ref{eq:fij11}) and (\ref{eq:fij10}) as the following mapping from ${\bm z}$ to $\hat{\bm {z}}$ (see the derivation of (\ref{eq:ditilde}), (\ref{eq:fij6}) and (\ref{eq:fij7})).
\begin{align}
\hat{d}_i &= \frac{s_{ii}+(\Delta_2(F,D))_{ii}}{1-r_{ii}-(\Delta_1(F))_{ii}} \quad (i=1, \ldots, n), \label{eq:hatdi} \\
\hat{f}_{ij} &= \frac{s_{ij}+(\Delta_2(F,D))_{ij}+d_j (r_{ij}+(\Delta_1(F))_{ij})}{d_j-d_i} \nonumber \\
& \quad\quad\quad\quad\quad\quad\quad\quad\quad\; (\mathcal{K}(i)\ne\mathcal{K}(j)), \label{eq:hatfij1} \\
\hat{f}_{ij} &= \frac{r_{ij}+(\Delta_1(F))_{ij}}{2} \quad (\mathcal{K}(i)=\mathcal{K}(j)). \label{eq:hatfij2}
\end{align}
Let us write this mapping as $\hat{\bm z}={\mathcal F}({\bm z})$. Then, the fixed point of $\mathcal{F}$ corresponds to the solution of ${\bf P'}$. According to Banach's fixed-point theorem, if $\mathcal{F}$ is a contraction mapping on $B_{\delta}({\bf z^*})$, it has a unique fixed point in $B_{\delta}({\bf z^*})$.

Now we prove that $\mathcal{F}$ is a contraction mapping from $B_{\delta}({\bm z}^*)$ to itself for some $\delta>0$ if $\|F^*\|_2$ is sufficiently small (that is, if $\tilde{X}$ is a sufficiently good approximation to one of the eigenvector matrix of $A$). For this, we need to show that $\mathcal{F}$ has the following two properties.
\begin{itemize}
\item[(i)] $\mathcal{F}(B_{\delta}({\bm z}^*)) \subseteq B_{\delta}({\bm z}^*)$.
\item[(ii)] There exists a constant $c<1$ such that for any ${\bm z}_1,{\bm z}_2 \in B_{\delta}({\bm z}^*)$, $\|\mathcal{F}({\bm z}_1)-\mathcal{F}({\bm z}_2)\| \le c\|{\bm z}_1-{\bm z}_2\|$.
\end{itemize}
Here, $\|\cdot\|$ denotes the Euclidean norm in $\mathbb{R}^{n^2+n}$, which is equal to $\sqrt{\|F\|_F^2+\|D\|_F^2}$. Note that (i) can be derived from (ii). In fact, suppose that (ii) holds and ${\bm z}\in B_{\delta}({\bm z}^*)$. Then, $\|{\bm z}-{\bm z}^*\|\le\delta$ from the definition and
\begin{align}
\|\mathcal{F}({\bm z})-{\bm z}^*\| &= \|\mathcal{F}({\bm z})-\mathcal{F}({\bm z}^*)\| \nonumber \\
&\le c\|{\bm z}-{\bm z}^*\| \le c\delta < \delta,
\end{align}
which shows $\mathcal{F}({\bm z})\in B_{\delta}({\bm z}^*)$ and therefore $\mathcal{F}(B_{\delta}({\bm z}^*)) \subseteq B_{\delta}({\bm z}^*)$. Here, we used the fact that ${\bm z}^*$ is a solution of {\bf P} and hence also a solution of {\bf P'} and a fixed point of $\mathcal{F}$. Thus, the remaining task is to show (ii).

To show (ii), we use the following mean value theorem in the Banach space \cite{Cook94}.
\begin{Thm}
Let $Z$ and $W$ be Banach spaces and $\mathcal{G}$ be a G\^{a}teaux differentiable mapping from an open set $U\subseteq Z$ to $W$. Let $[{\bm z}_1, {\bm z}_2]$ denote the line segment joining two points ${\bm z}_1, {\bm z}_2$ in $U$. Then
\begin{equation}
\|\mathcal{G}({\bm z}_1)- \mathcal{G}({\bm z}_2)\| \le \|{\bm z}_1-{\bm z}_2\|\sup_{{\bm z}\in[{\bm z}_1, {\bm z}_2]}\|D\mathcal{G}({\bm z})\|,
\end{equation}
where $D\mathcal{G}({\bm z})$ is the G\^{a}teaux derivative of $\mathcal{G}$.
\end{Thm}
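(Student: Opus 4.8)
The plan is to reduce this vector-valued statement to the classical scalar mean value theorem by testing $\mathcal{G}$ against a single, well-chosen functional. If $\mathcal{G}({\bm z}_1)=\mathcal{G}({\bm z}_2)$ the inequality is trivial, so assume $\mathcal{G}({\bm z}_1)\ne\mathcal{G}({\bm z}_2)$; by the Hahn--Banach theorem there is $\ell\in W^*$ with $\|\ell\|=1$ and $\ell\bigl(\mathcal{G}({\bm z}_1)-\mathcal{G}({\bm z}_2)\bigr)=\|\mathcal{G}({\bm z}_1)-\mathcal{G}({\bm z}_2)\|$. Writing ${\bm z}_t\equiv{\bm z}_2+t({\bm z}_1-{\bm z}_2)$ and ${\bm v}\equiv{\bm z}_1-{\bm z}_2$ (the segment $[{\bm z}_1,{\bm z}_2]$ lies in the open set $U$ by hypothesis), I would then study the real-valued function $\varphi(t)\equiv\ell(\mathcal{G}({\bm z}_t))$ on $[0,1]$, whose endpoint difference $\varphi(1)-\varphi(0)$ is, by the choice of $\ell$, exactly $\|\mathcal{G}({\bm z}_1)-\mathcal{G}({\bm z}_2)\|$.

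The key step is to verify that $\varphi$ is differentiable at every $t\in[0,1]$ (one-sidedly at the endpoints), with $\varphi'(t)=\ell\bigl(D\mathcal{G}({\bm z}_t){\bm v}\bigr)$. This follows by forming the difference quotient $(\varphi(t+h)-\varphi(t))/h=\ell\bigl((\mathcal{G}({\bm z}_t+h{\bm v})-\mathcal{G}({\bm z}_t))/h\bigr)$, letting $h\to0$, and invoking the definition of the G\^{a}teaux derivative of $\mathcal{G}$ at ${\bm z}_t$ in the direction ${\bm v}$ together with the linearity and continuity of $\ell$. In particular $\varphi$ is continuous on $[0,1]$, so the classical one-dimensional mean value theorem yields a $\tau\in(0,1)$ with $\varphi(1)-\varphi(0)=\varphi'(\tau)$.

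It then remains to substitute and estimate. Using $\|\ell\|=1$, the operator-norm bound $\|D\mathcal{G}({\bm z}_\tau){\bm v}\|\le\|D\mathcal{G}({\bm z}_\tau)\|\,\|{\bm v}\|$, and $\|D\mathcal{G}({\bm z}_\tau)\|\le\sup_{{\bm z}\in[{\bm z}_1,{\bm z}_2]}\|D\mathcal{G}({\bm z})\|$, one obtains
\[
\|\mathcal{G}({\bm z}_1)-\mathcal{G}({\bm z}_2)\|=\varphi(1)-\varphi(0)=\ell\bigl(D\mathcal{G}({\bm z}_\tau){\bm v}\bigr)\le\|{\bm z}_1-{\bm z}_2\|\sup_{{\bm z}\in[{\bm z}_1,{\bm z}_2]}\|D\mathcal{G}({\bm z})\|,
\]
which is the asserted inequality.

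The one delicate point is the differentiability of $\varphi$: G\^{a}teaux differentiability a priori controls only one-directional difference quotients, so one cannot appeal to a naive componentwise mean value theorem, which in fact fails in infinite dimensions. Here it suffices precisely because $\varphi$ probes $\mathcal{G}$ only along the single fixed direction ${\bm v}$, so no stronger (Fr\'{e}chet) differentiability is required; still, this is exactly the gap a careless argument would leave, and it is worth making explicit. With $\varphi$ in hand the remainder is routine, and the theorem then applies verbatim to $\mathcal{G}=\mathcal{F}$ with $Z=W=\mathbb{R}^{n^2+n}$ in the verification of property (ii) that follows.
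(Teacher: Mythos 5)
The paper does not prove this theorem; it is quoted from Cook's note \cite{Cook94} and used as a black box, so there is no in-paper proof to compare against. Your argument is correct and is the standard proof: reduce to a scalar function by applying a norming functional $\ell\in W^*$ obtained from Hahn--Banach, differentiate $\varphi(t)=\ell(\mathcal{G}({\bm z}_t))$ along the segment using the G\^{a}teaux hypothesis together with the linearity and boundedness of $\ell$, invoke the one-variable mean value theorem, and estimate with $\|\ell\|=1$ and the operator-norm bound on $D\mathcal{G}$. You rightly flag the one subtle point, namely that G\^{a}teaux differentiability controls only directional difference quotients, and correctly observe that this suffices here because $\varphi$ probes $\mathcal{G}$ only along the single fixed direction ${\bm v}={\bm z}_1-{\bm z}_2$; the pointwise differentiability of $\varphi$ on $[0,1]$ then also delivers the continuity that the scalar mean value theorem needs. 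One minor caveat: you assert that the segment $[{\bm z}_1,{\bm z}_2]$ lies in $U$ ``by hypothesis,'' but the theorem as worded places only the two endpoints in $U$; inclusion of the whole segment (or convexity of $U$) must be assumed for $D\mathcal{G}$ to be defined along it, and indeed this holds in the paper's application since there $U$ is an open ball.
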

We choose $U$ to be an open sphere obtained by expanding the radius of $B_{\delta}({\bm z}^*)$ by $\epsilon\equiv\frac{1}{2}(\frac{\eta}{2}-\delta)$, so that $\mathcal{F}$ is defined also on $U$. Also, if ${\bm z}_1, {\bm z}_2 \in B_{\delta}({\bm z}^*)$, we can replace $\sup_{{\bm z}\in[{\bm z}_1, {\bm z}_2]}$ in the right-hand side with $\sup_{{\bm z}\in B_{\delta}({\bm z}^*)}$. Moreover, our $\mathcal{F}$ is a mapping between finite dimensional Banach spaces and each element of $\mathcal{F}$, as defined by (\ref{eq:hatdi}), (\ref{eq:hatfij1}) and (\ref{eq:hatfij2})), is a rational function of $\{f_{ij}\}_{i,j=1}^n$ and $\{d_i\}_{i=1}^n$ when $\|F\|_2<1$, as can be easily seen from (\ref {eq:Delta1definition}), (\ref {eq:Delta2definition}) and (\ref {eq:Deltadefinition}). Hence, we can use the Jacobian matrix $\frac{\partial\mathcal{F}}{\partial{\bm z}}$ instead of the G\^{a}teaux derivative. Thus, we obtain the following corollary.
\begin{Cor}
For any ${\bm z}_1,{\bm z}_2 \in B_{\delta}({\bm z}^*)$,
\begin{equation}
\|\mathcal{F}({\bm z}_1)-\mathcal{F}({\bm z}_2)\| \le \|{\bm z}_1-{\bm z}_2\|\sup_{{\bm z}\in B_{\delta}({\bm z}^*)}\left\|\frac{\partial\mathcal{F}}{\partial{\bm z}}\right\|_2.
\end{equation}
\end{Cor}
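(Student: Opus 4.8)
The plan is to derive the Corollary as a direct specialization of the mean value theorem just stated, so that the only real work is checking its hypotheses for $\mathcal{G}=\mathcal{F}$ and identifying the G\^ateaux derivative with the Jacobian. First I would take $Z=W=\mathbb{R}^{n^2+n}$ with the stated Euclidean norm, $\mathcal{G}=\mathcal{F}$, and $U$ the open ball of radius $\delta+\epsilon$ about ${\bm z}^*$, with $\epsilon=\frac12\left(\frac{\eta}{2}-\delta\right)$ as chosen in the text. On $U$ one verifies that none of the denominators in (\ref{eq:hatdi})--(\ref{eq:hatfij2}) vanishes: for ${\bm z}\in U$ the corresponding pair $(F,D)$ satisfies $|d_j-d_i|\ge\eta-2(\delta+\epsilon)=\frac{\eta}{2}-\delta>0$ whenever $\mathcal{K}(i)\ne\mathcal{K}(j)$ (using $d_i^*=\lambda_{\mathcal{K}(i)}$), the bound $\|F\|_2<1$ holds, so that $\Delta(F)$ and hence $\Delta_1(F)$, $\Delta_2(F,D)$ are given by convergent series, provided $\|F^*\|_2$ and $\delta$ are taken small enough, and $1-r_{ii}-(\Delta_1(F))_{ii}$ stays close to $1$ because $r$ is small (as $\tilde X$ is nearly orthogonal) and $\Delta_1(F)$ is quadratic in $F$. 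Thus every component of $\mathcal{F}$ is a rational function of $\{f_{ij}\}$ and $\{d_i\}$ with nonvanishing denominator on $U$, hence $C^\infty$ there.

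Next, since $\mathcal{F}$ is $C^1$ (indeed $C^\infty$) between finite-dimensional spaces, its G\^ateaux derivative at each ${\bm z}\in U$ coincides with its Fr\'echet derivative, namely the linear map ${\bm v}\mapsto\frac{\partial\mathcal{F}}{\partial{\bm z}}({\bm z})\,{\bm v}$, whose operator norm induced by the Euclidean norm is exactly the spectral norm $\left\|\frac{\partial\mathcal{F}}{\partial{\bm z}}({\bm z})\right\|_2$. Applying the mean value theorem to $\mathcal{F}$ on $U$ therefore gives, for any ${\bm z}_1,{\bm z}_2\in U$,
\[
\|\mathcal{F}({\bm z}_1)-\mathcal{F}({\bm z}_2)\|\le\|{\bm z}_1-{\bm z}_2\|\sup_{{\bm z}\in[{\bm z}_1,{\bm z}_2]}\left\|\frac{\partial\mathcal{F}}{\partial{\bm z}}({\bm z})\right\|_2 .
\]
Finally, because $B_{\delta}({\bm z}^*)$ is convex, for ${\bm z}_1,{\bm z}_2\in B_{\delta}({\bm z}^*)$ the segment $[{\bm z}_1,{\bm z}_2]$ lies in $B_{\delta}({\bm z}^*)\subset U$, so the supremum over the segment is dominated by the supremum over $B_{\delta}({\bm z}^*)$, which yields the stated inequality.

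I do not expect a genuine obstacle in proving the Corollary itself: it is essentially the mean value theorem with the G\^ateaux derivative replaced by the Jacobian matrix. The only point requiring care is the verification that the slightly enlarged domain $U$ still avoids all the denominators of $\mathcal{F}$ --- this is precisely why $\epsilon$ is taken to be a fixed fraction of $\frac{\eta}{2}-\delta$ rather than $\frac{\eta}{2}-\delta$ itself --- together with the bookkeeping of the smallness conditions on $\|F^*\|_2$ (and hence on $\delta$) that keep $\|F\|_2<1$ and $1-r_{ii}-(\Delta_1(F))_{ii}\ne0$ on all of $U$. These same conditions will be reused in the subsequent step, which is the real work, where $\sup_{{\bm z}\in B_{\delta}({\bm z}^*)}\left\|\frac{\partial\mathcal{F}}{\partial{\bm z}}\right\|_2$ must be shown to be strictly less than $1$.
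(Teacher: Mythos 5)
Your argument is essentially identical to the paper's: enlarge $B_\delta({\bm z}^*)$ to the open ball $U$ of radius $\delta+\epsilon$ with $\epsilon=\frac12(\frac{\eta}{2}-\delta)$ so that $\mathcal{F}$ remains defined, observe that each component of $\mathcal{F}$ is a rational function of the coordinates whose denominators do not vanish on $U$ so the G\^ateaux derivative coincides with the Jacobian, apply the Banach-space mean value theorem, and finally use convexity of $B_\delta({\bm z}^*)$ to pass from the supremum over the segment $[{\bm z}_1,{\bm z}_2]$ to the supremum over the ball. You spell out the denominator verification (including the explicit computation $\eta-2(\delta+\epsilon)=\frac{\eta}{2}-\delta>0$) a bit more carefully than the paper does, but this is a refinement of detail, not a different route.
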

Thus, all we need to do is to show that $\left\|\frac{\partial\mathcal{F}}{\partial{\bm z}}\right\|_2\le c<1$ when ${\bm z}\in B_{\delta}({\bm z}^*)$. Since $\frac{\partial\mathcal{F}}{\partial{\bm z}}$ is an $(n^2+n)\times(n^2+n)$ matrix and $\left\|\frac{\partial\mathcal{F}}{\partial{\bm z}}\right\|_2 \le \left\|\frac{\partial\mathcal{F}}{\partial{\bm z}}\right\|_F$, it is sufficient to make sure that the modulus of each element of $\frac{\partial\mathcal{F}}{\partial{\bm z}}$ is bounded by $\frac{c}{n^2+n}$, where $c<1$. Note that each element of $\frac{\partial\mathcal{F}}{\partial{\bm z}}$ has a form of $\frac{\p \hat{f}_{ij}}{\p f_{k\ell}}$, $\frac{\p\hat{f}_{ij}}{\p d_k}$, $\frac{\p \hat{d}_i}{\p f_{k\ell}}$ or $\frac{\p\hat{d}_i}{\p d_k}$. Since the functional form of $\hat{f}_{ij}$ is different depending on whether $\mathcal{K}(i)=\mathcal{K}(j)$ or not, we need to evaluate six kinds of elements.

As a preparation, we evaluate intermediate quantities appearing in the evaluation of these partial derivatives.
\begin{Lem}
When $\|F\|_2<\frac{1}{10}$ and $\|F^*\|_2<\frac{1}{10}$, the following inequalities hold. Here, $1\le i,j,k,\ell\le n$ unless otherwise noted.
\begin{align}
\|\Delta_1(F)\| &\le \frac{1}{2}\|F\|_2 \le \frac{1}{20}, \label{eq:Delta1bound} \\
\|\Delta_2(F,D)\| &\le \frac{1}{2}\|D\|_2\|F\|_2 \le \frac{1}{20}\|D\|_2, \label{eq:Delta2bound} \\
\frac{\p}{\p f_{k\ell}}(\Delta_1(F))_{ij} &\le 9\|F\|_2, \label{eq:dDelta1F} \\
\frac{\p}{\p f_{k\ell}}(\Delta_2(F,D))_{ij} &\le 9\|D\|_2\|F\|_2, \label{eq:dDelta2F} \\
\frac{\p}{\p d_k}(\Delta_2(F,D))_{ij} &\le 4\|F\|_2^2, \label{eq:dDelta2F2} \\
|r_{ij}| &\le \frac{5}{2}\|F^*\|_2, \label{eq:rijbound} \\
|s_{ij}| &\le \begin{cases}
    \frac{5}{2}\|A\|_2\|F^*\|_2 & (i\ne j) \\
    \frac{5}{4}\|A\|_2 & (i=j)
  \end{cases} \label{eq:sijbound}
\end{align}
\end{Lem}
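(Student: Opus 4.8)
The seven estimates split naturally: three bound the nonlinear remainders $\Delta_1,\Delta_2$ and their entrywise derivatives in terms of $\|F\|_2$ and $\|D\|_2$, and two bound the constant data $r_{ij},s_{ij}$ in terms of $\|F^*\|_2$; the latter two will be deduced from the former. The only tools needed are the Neumann series for $(I+F)^{-1}$, submultiplicativity of the spectral norm, and the elementary inequality $|M_{ij}|\le\|M\|_2$. The first step is (\ref{eq:Delta1bound})--(\ref{eq:Delta2bound}): from (\ref{eq:Deltadefinition}), $\|\Delta(F)\|_2\le\sum_{k\ge2}\|F\|_2^{\,k}=\|F\|_2^2/(1-\|F\|_2)\le\tfrac{10}{9}\|F\|_2^2\le\tfrac19\|F\|_2$ when $\|F\|_2<\tfrac1{10}$, and hence $\|F-\Delta(F)\|_2\le\tfrac{10}{9}\|F\|_2$. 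Substituting these into (\ref{eq:Delta1definition}) and (\ref{eq:Delta2definition}) and applying the triangle inequality and submultiplicativity gives $\|\Delta_1(F)\|_2\le2\|\Delta(F)\|_2+\|F-\Delta(F)\|_2^2\le\tfrac{28}{81}\|F\|_2$ and, with one extra factor $\|D\|_2$, $\|\Delta_2(F,D)\|_2\le\tfrac{28}{81}\|D\|_2\|F\|_2$, both well inside the claimed $\tfrac12$; the numerical consequences follow from $\|F\|_2<\tfrac1{10}$.

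For the data bounds (\ref{eq:rijbound})--(\ref{eq:sijbound}) I would \emph{not} expand $R=I-\tilde X^{\top}\tilde X$ and $S=\tilde X^{\top}A\tilde X$ directly, but instead evaluate the exact identities (\ref{eq:OA3}) and (\ref{eq:OA4}) at $(F^*,D^*)$, which gives $R=F^*+(F^*)^{\top}-\Delta_1(F^*)$ and $S=D^*-D^*F^*-(F^*)^{\top}D^*-\Delta_2(F^*,D^*)$. Since $D^*$ is orthogonally similar to $A$ we have $\|D^*\|_2=\|A\|_2$, so $|d_i^*|\le\|A\|_2$; combining this with $|f_{ij}^*|\le\|F^*\|_2$, $\|F^*\|_2<\tfrac1{10}$, and the just-proved bounds (\ref{eq:Delta1bound})--(\ref{eq:Delta2bound}) at $(F^*,D^*)$ gives, entrywise, $|r_{ij}|\le|f_{ij}^*|+|f_{ji}^*|+\|\Delta_1(F^*)\|_2\le\tfrac52\|F^*\|_2$ for all $i,j$, and $|s_{ij}|\le|d_i^*||f_{ij}^*|+|d_j^*||f_{ji}^*|+\|\Delta_2(F^*,D^*)\|_2\le\tfrac52\|A\|_2\|F^*\|_2$ for $i\ne j$, while the diagonal term contributes an extra $|d_i^*|\le\|A\|_2$ and yields $|s_{ii}|\le\tfrac54\|A\|_2$.

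The three derivative bounds (\ref{eq:dDelta1F})--(\ref{eq:dDelta2F2}) start from the entrywise differentiation of $\Delta(F)$. Let $J_{k\ell}$ be the matrix with a $1$ in position $(k,\ell)$ and zeros elsewhere and put $H\equiv(I+F)^{-1}-I$, $G\equiv F-\Delta(F)$. Then $\partial(I+F)^{-1}/\partial f_{k\ell}=-(I+F)^{-1}J_{k\ell}(I+F)^{-1}$ gives $\partial\Delta(F)/\partial f_{k\ell}=-HJ_{k\ell}-J_{k\ell}H-HJ_{k\ell}H$, so $\|\partial\Delta(F)/\partial f_{k\ell}\|_2\le2\|H\|_2+\|H\|_2^2\le\tfrac52\|F\|_2$ (using $\|H\|_2\le\tfrac{10}{9}\|F\|_2$), and hence $\|\partial G/\partial f_{k\ell}\|_2\le1+\tfrac52\|F\|_2\le\tfrac54$. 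The product rule on (\ref{eq:Delta1definition}) then yields $\|\partial\Delta_1(F)/\partial f_{k\ell}\|_2\le2\|\partial\Delta(F)/\partial f_{k\ell}\|_2+2\|\partial G/\partial f_{k\ell}\|_2\|G\|_2\le\tfrac{70}{9}\|F\|_2<9\|F\|_2$, and the identical computation on (\ref{eq:Delta2definition}) carries the extra factor $\|D\|_2$ and gives (\ref{eq:dDelta2F}). For (\ref{eq:dDelta2F2}) I would exploit that $\Delta_2(F,D)$ is \emph{linear} in $D$: $\partial\Delta_2(F,D)/\partial d_k=-J_{kk}\Delta(F)-\Delta(F)^{\top}J_{kk}-G^{\top}J_{kk}G$, whose norm is at most $2\|\Delta(F)\|_2+\|G\|_2^2\le\tfrac{280}{81}\|F\|_2^2<4\|F\|_2^2$. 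In every case the entrywise assertion of the Lemma follows from $|M_{ij}|\le\|M\|_2$.

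None of this is conceptually deep; the only genuine care is in tracking the absolute constants, which is tightest in (\ref{eq:dDelta1F})--(\ref{eq:dDelta2F}), where the bound obtained, roughly $\tfrac{70}{9}\|F\|_2$, leaves a real but not large margin below $9\|F\|_2$. Consequently one must keep the $1/(1-\|F\|_2)$ factors of the Neumann series rather than crudely bounding geometric sums, and one must keep straight that $\Delta_1$ depends on $F$ alone (so $\partial\Delta_1/\partial d_k=0$ and no such bound is needed), whereas $\Delta_2$ depends on all entries of $F$ through $G$ nonlinearly and on $D$ linearly, which is what makes (\ref{eq:dDelta2F2}) the easiest of the three.
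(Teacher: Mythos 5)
Your proof is correct and takes essentially the same strategy as the paper: evaluate the exact identities (\ref{eq:OA3})--(\ref{eq:OA4}) at $(F^*,D^*)$ to bound $r_{ij}$ and $s_{ij}$, and bound the entrywise derivatives of $\Delta_1,\Delta_2$ by bounding the full matrix 2-norm of the differential and using $|M_{ij}|\le\|M\|_2$. There are two technical variations worth noting, both of which work and neither of which changes the substance. First, for (\ref{eq:Delta1bound})--(\ref{eq:Delta2bound}) the paper cites a pre-established bound from Ogita--Aishima, $\|\Delta_1(F)\|\le(3-2\|F\|_2)\|F\|_2^2/(1-\|F\|_2)^2$, and then inserts $\|F\|_2<\tfrac1{10}$; you instead rederive the estimate from the Neumann series and the triangle inequality, obtaining the slightly tighter $\tfrac{28}{81}\|F\|_2$ rather than $\tfrac12\|F\|_2$ — self-contained at the cost of being a bit longer. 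Second, for $\partial\Delta(F)/\partial f_{k\ell}$ the paper differentiates the power series $\sum_{k\ge2}(-F)^k$ term by term and sums $\sum k\|F\|_2^{k-1}$, obtaining the constant $3$; you differentiate $(I+F)^{-1}$ via the inverse-derivative formula and expand in $H=(I+F)^{-1}-I$, obtaining $\tfrac52$. Your route is arguably cleaner because it avoids the infinite sum, and your constant is tighter, but both land comfortably under the target $9\|F\|_2$ (you get $\tfrac{70}{9}$, the paper gets exactly $9$). The remaining estimates — linearity of $\Delta_2$ in $D$ for (\ref{eq:dDelta2F2}), and the entrywise bounds on $r_{ij}$, $s_{ij}$ including the separate diagonal case $|s_{ii}|\le\tfrac54\|A\|_2$ — match the paper's argument.
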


\begin{proof}
From equations (26) and (27) in \cite{Ogita18}, we have
\begin{align}
\|\Delta_1(F)\| &\le \frac{(3-2\|F\|_2)\|F\|_2^2}{(1-\|F\|_2)^2} \nonumber \\
&\le \frac{3\cdot\frac{1}{10}\|F\|_2}{(1-\frac{1}{10})^2} \nonumber \\
&\le \frac{1}{2}\|F\|_2 \le \frac{1}{20}.
\end{align}
Similarly, from equations (26) and (28) in \cite{Ogita18}, it follows that
\begin{align}
\|\Delta_2(F,D)\| &\le \frac{(3-2\|F\|_2)\|F\|_2^2}{(1-\|F\|_2)^2}\,\|D\|_2 \nonumber \\
&\le \frac{1}{2}\|D\|_2\|F\|_2 \le \frac{1}{20}\|D\|_2.
\end{align}

To derive (\ref{eq:dDelta1F}), (\ref{eq:dDelta2F}) and (\ref{eq:dDelta2F2}), we first evaluate the change of $\Delta(F)$ when $F$ changes to $F+dF$. From the definition of $\Delta(F)$ in (\ref{eq:Deltadefinition}), we have
\begin{align}
& d\Delta(F) \nonumber \\
&\equiv \Delta(F+dF)-\Delta(F) \nonumber \\
&= \sum_{k=2}^{\infty} (-1)^k\{(dF)F^{k-1}+F(dF)F^{k-2}+\cdots F^{k-1}(dF)\} \nonumber \\
& \quad\quad +O((dF)^2) \nonumber \\
&= \sum_{k=2}^{\infty} (-1)^k\sum_{\ell=0}^{k-1}F^{\ell}(dF)F^{k-1-\ell} +O((dF)^2).
\end{align}
Hence,
\begin{align}
\|d\Delta(F)\|_2 &\le \sum_{k=2}^{\infty}\sum_{\ell=0}^{k-1}\|F\|_2^{k-1}\|dF\|_2\|F\|_2^{k-1-\ell} + O(\|dF\|_2^2) \nonumber \\
&= \sum_{k=2}^{\infty}k\|F\|_2^{k-1}\|dF\|_2 + O(\|dF\|_2^2) \nonumber \\
&= \|dF\|_2\cdot\left\{\frac{1}{(1-\|F\|_2)^2}-1\right\} + O(\|dF\|_2^2) \nonumber \\
&= \|dF\|_2\cdot\frac{\|F\|_2(2-\|F\|_2)}{(1-\|F\|_2)^2} + O(\|dF\|_2^2) \nonumber \\
&\le \|dF\|_2\cdot\frac{2\|F\|_2}{(1-\frac{1}{10})^2} + O(\|dF\|_2^2) \nonumber \\
&\le 3\|F\|_2\|dF\|_2 + O(\|dF\|_2^2).
\end{align}
Using this result, we bound the left-hand side of (\ref{eq:dDelta1F}). The change of $\Delta_1(F)$ can be written as
\begin{align}
d\Delta_1(F) &= d\Delta(F) + (d\Delta(F))^{\top} \nonumber \\
& \quad\quad + (dF-d\Delta(F))^{\top}(F-\Delta(F)) \nonumber \\
& \quad\quad + (F-\Delta(F))^{\top}(dF-d\Delta(F)) \nonumber \\
& \quad\quad + O((dF)^2).
\end{align}
Hence,
\begin{align}
& \|d\Delta_1(F)\|_2 \nonumber \\
&\le 2\|d\Delta(F)\|_2 \nonumber \\
& \quad\quad + 2\|F-\Delta(F)\|_2(\|dF\|_2+\|d\Delta(F)\|_2) \nonumber \\
& \quad\quad + O(\|dF\|_2^2) \nonumber \\
&\le 6\|F\|_2\|dF\|_2 + 2\cdot\frac{10}{9}\|F\|_2(1+3\|F\|_2)\|dF\|_2 \nonumber \\
& \quad\quad + O(\|dF\|_2^2) \nonumber \\
&\le 9\|F\|_2\|dF\|_2 + O(\|dF\|_2^2),
\end{align}
where we used in the second inequality, 
\begin{align}
\|F-\Delta(F)\|_2 &= \left\|\sum_{k=1}^{\infty}(-1)^k F^k\right\| \le \sum_{k=1}^{\infty}\|F\|_2^k \nonumber \\
&= \frac{\|F\|_2}{1-\|F\|_2} \le \frac{10}{9}\|F\|_2.
\end{align}
Now we consider the case where the $(k,\ell)$ element of $dF$ is equal to $df_{k\ell}$ and all the other elements are zero. Then,
\begin{align}
|d(\Delta_1(F))_{ij}| &\le \|d\Delta_1(F)\|_2 \nonumber \\
&\le 9\|F\|_2\|dF\|_2 + O(\|dF\|_2^2) \nonumber \\
&= 9\|F\|_2|df_{k\ell}| + O(|df_{k\ell}|^2).
\end{align}
By dividing both sides by $df_{k\ell}$ and taking the limit of $df_{k\ell}\rightarrow 0$, we have (\ref{eq:dDelta1F}). Inequality (\ref{eq:dDelta2F}) can be obtained in a similar way.

Since $\Delta_2(F,D)$ is linear in $D$, when $D$ is changed to $D+dD$, it changes as
\begin{align}
d\Delta_2(F,D) &= -(dD)\Delta(F)-(\Delta(F))^{\top}(dD) \nonumber \\
& \quad\quad -(F-\Delta(F))^{\top}(dD)(F-\Delta(F)).
\end{align}
Thus,
\begin{align}
\|d\Delta_2(F,D)\|_2 &\le 2\|\Delta(F)\|_2\|dD\|_2 \nonumber \\
& \quad\quad +\|F-\Delta(F)\|_2^2\|dD\|_2 \nonumber \\
&\le \left\{2\cdot\frac{10}{9}+\left(\frac{10}{9}\right)^2\right\}\|F\|_2^2\|dD\|_2 \nonumber \\
&< 4\|F\|_2^2\|dD\|_2.
\end{align}
where we used in the second inequality,
\begin{align}
\|\Delta(F)\|_2 &= \left\|\sum_{k=2}^{\infty}(-1)^k F^k\right\| \le \sum_{k=2}^{\infty}\|F\|_2^k \nonumber \\
&= \frac{\|F\|_2^2}{1-\|F\|_2} \le \frac{10}{9}\|F\|_2^2.
\end{align}
Considering the case where the $k$th diagonal element of $dD$ is $dd_k$ and all the other elements are zero leads to
\begin{align}
|d(\Delta_2(F,D))_{ij}| &\le \|d\Delta_2(F,D)\|_2 \nonumber \\
&\le 4\|F\|_2^2\|dD\|_2 \nonumber \\
&= 4\|F\|_2^2|dd_k|.
\end{align}
Dividing both sides by $dd_k$ and taking the limit of $dd_k\rightarrow 0$ gives (\ref{eq:dDelta2F2}).

To prove (\ref{eq:rijbound}), we use the following equation satisfied by $F^*$ (see (\ref{eq:OA3})).
\begin{equation}
F^*+(F^*)^{\top}=R+\Delta_1(F^*).
\end{equation}
Using (\ref{eq:Delta1bound}), we have
\begin{align}
\|R\|_2 &\le 2\|F^*\|_2+\|\Delta_1(F^*)\|_2 \nonumber \\
&\le 2\|F^*\|_2|+\frac{1}{2}\|F^*\|_2=\frac{5}{2}\|F^*\|_2,
\end{align}
from which (\ref{eq:rijbound}) follows immediately.

Finally, we derive (\ref{eq:sijbound}). First, note that $(F^*,D^*)$ satisfies the following equation (see (\ref{eq:OA4})).
\begin{equation}
D^*-D^*F^*-(D^*F^*)^{\top}=S+\Delta_2(F^*,D^*).
\end{equation}
Thus,
\begin{align}
\|S-D^*\|_2 &\le 2\|D^*\|_2\|F^*\|_2 + \|\Delta_2(F^*,D^*)\|_2 \nonumber \\
& \le 2\|D^*\|_2\|F^*\|_2 + \frac{1}{2}\|D^*\|_2\|F^*\|_2 \nonumber \\
&= \frac{5}{2}\|D^*\|_2\|F^*\|_2 = \frac{5}{2}\|A\|_2\|F^*\|_2,
\end{align}
where we used $\|D^*\|_2=\|A\|_2$ as the diagonal elements of $D^*$ are the eigenvalues of $A$.
Since $s_{ij}$ ($i\ne j$) is an off-diagonal element of $S-D^*$, the case of $i\ne j$ in (\ref{eq:sijbound}) follows. Furthermore, from $\|F^*\|_2<\frac{1}{10}$, we have
\begin{align}
\|S\|_2 &\le \|D^*\|_2 + 2\|D^*\|_2\|F^*\|_2 + \|\Delta_2(F^*,D^*)\|_2 \nonumber \\
&\le \|D^*\|_2\left(1+\frac{5}{2}\|F^*\|_2\right) \nonumber \\
&\le \frac{5}{4}\|D^*\|_2 = \frac{5}{4}\|A\|_2,
\end{align}
from which the case of $i=j$ in (\ref{eq:sijbound}) follows.
\end{proof}

When ${\bm z}\in B_{\delta}({\bm z}^*)$ and $\eta-2\delta>0$, the bounds in (\ref{eq:Delta1bound}) through (\ref{eq:dDelta2F2}) can be rewritten in terms of $\|F^*\|_2$ and $\|A\|_2$ using the following relations.
\begin{align}
\|F\|_2 &\le \|F^*\|_2 + \|F-F^*\|_2 \nonumber \\
&\le \|F^*\|_2 + \|F-F^*\|_F \nonumber \\
&< \|F^*\|_2 + \delta, \label{eq:Frelation} \\
\|D\|_2 &\le \|D^*\|_2 + \|D-D^*\|_2 \nonumber \\
&\le \|D^*\|_2 + \|D-D^*\|_F \nonumber \\
&< \|D^*\|_2 + \delta \le 2\|A\|_2, \label{eq:Drelation}
\end{align}
where in the last inequality, we used the relation
\begin{align}
\delta \le \frac{\eta}{2} &\le \frac{1}{2}\{\lambda_{\max}(A)-\lambda_{\min}(A)\} \nonumber \\
&\le \frac{1}{2}(\|A\|_2+\|A\|_2) = \|A\|_2.
\end{align}
Here, $\lambda_{\max}(A)$ and $\lambda_{\min}(A)$ denote the largest and smallest eigenvalues, respectively, of $A$.

Using these results, it is straightforward to evaluate the elements of $\frac{\p\mathcal{F}}{\p{\bm z}}$. We have the following Lemma.
\begin{Lem}
\label{Lemma_bound_derivatives}
Suppose that $\|F^*\|<\frac{1}{20}$ and $\delta<\min\{\frac{\eta}{3},\|F^*\|_2\}$. Furthermore, let ${\bm z}\in B_{\delta}({\bm z}^*)$. Then, we have the following bounds on the modulus of each element of $\frac{\p\mathcal{F}}{\p{\bm z}}$. Here, $1\le i,j,k,\ell\le n$ unless otherwise noted.
\begin{align}
\left|\frac{\p\hat{f}_{ij}}{\p f_{k\ell}}\right|
&\le 9\|F^*\|_2 \quad (\mathcal{K}(i)=\mathcal{K}(j)), \label{eq:ffbound1} \\
\left|\frac{\p\hat{f}_{ij}}{\p d_k}\right| &= 0 \quad (\mathcal{K}(i)=\mathcal{K}(j)), \label{eq:fdbound1} \\
\left|\frac{\p\hat{f}_{ij}}{\p f_{k\ell}}\right| &\le \frac{216}{\eta}\|A\|_2\|F^*\|_2 \quad (\mathcal{K}(i)\ne\mathcal{K}(j)), \label{eq:ffbound2} \\
\left|\frac{\p\hat{f}_{ij}}{\p d_k}\right| &\le \frac{3}{\eta}\left(\frac{43}{10}+\frac{69}{2}\cdot\frac{\|A\|_2}{\eta}\right)\|F^*\|_2 \nonumber \\
& \quad\quad\quad\quad\quad\quad (\mathcal{K}(i)\ne\mathcal{K}(j)), \label{eq:fdbound2} \\
\left|\frac{\p\hat{d}_i}{\p f_{k\ell}}\right| &\le \frac{9600}{121}\|A\|_2\|F^*\|_2, \label{eq:dfbound} \\
\left|\frac{\p\hat{d}_i}{\p d_k}\right| &\le \frac{32}{33}\|F^*\|_2. \label{eq:ddbound}
\end{align}
\end{Lem}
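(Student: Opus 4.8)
The plan is to treat the six types of partial derivative one at a time, differentiating the closed-form expressions (\ref{eq:hatdi})--(\ref{eq:hatfij2}) by the product and quotient rules and then substituting the estimates of the preceding Lemma. Two preliminary observations are used throughout. Since ${\bm z}\in B_{\delta}({\bm z}^*)$ and $\delta<\|F^*\|_2$, the relations (\ref{eq:Frelation}) and (\ref{eq:Drelation}) give
\begin{equation*}
\|F\|_2<\|F^*\|_2+\delta<2\|F^*\|_2<\tfrac{1}{10},\qquad \|D\|_2<\|D^*\|_2+\delta\le 2\|A\|_2,
\end{equation*}
so the hypotheses $\|F\|_2<\tfrac{1}{10}$ and $\|F^*\|_2<\tfrac{1}{10}$ of the preceding Lemma hold, and one may invoke (\ref{eq:Delta1bound})--(\ref{eq:sijbound}) after replacing each $\|F\|_2$ by $2\|F^*\|_2$ and each $\|D\|_2$ by $2\|A\|_2$; I will also use $\|F^*\|_2^2<\tfrac{1}{20}\|F^*\|_2$ to turn quadratic terms into linear ones. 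Moreover, since $\delta<\eta/3$, the denominator of (\ref{eq:hatfij1}) satisfies $|d_j-d_i|\ge\eta-2\delta>\eta/3$ whenever $\mathcal{K}(i)\ne\mathcal{K}(j)$, and, by (\ref{eq:rijbound}), (\ref{eq:Delta1bound}) and $\|F^*\|_2<\tfrac{1}{20}$, the denominator of (\ref{eq:hatdi}) satisfies
\begin{equation*}
\bigl|1-r_{ii}-(\Delta_1(F))_{ii}\bigr|\ge 1-\tfrac{5}{2}\|F^*\|_2-\tfrac{1}{2}\|F\|_2\ge\tfrac{33}{40}.
\end{equation*}

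Five of the six bounds then follow quickly. When $\mathcal{K}(i)=\mathcal{K}(j)$, formula (\ref{eq:hatfij2}) shows $\hat{f}_{ij}$ is independent of $D$, which gives (\ref{eq:fdbound1}), while $\frac{\p\hat{f}_{ij}}{\p f_{k\ell}}=\tfrac{1}{2}\frac{\p(\Delta_1(F))_{ij}}{\p f_{k\ell}}$ is bounded via (\ref{eq:dDelta1F}) by $\tfrac{9}{2}\|F\|_2<9\|F^*\|_2$, which is (\ref{eq:ffbound1}). When $\mathcal{K}(i)\ne\mathcal{K}(j)$ the denominator $d_j-d_i$ of (\ref{eq:hatfij1}) does not involve $F$, so
\begin{equation*}
\frac{\p\hat{f}_{ij}}{\p f_{k\ell}}=\frac{1}{d_j-d_i}\left(\frac{\p(\Delta_2(F,D))_{ij}}{\p f_{k\ell}}+d_j\,\frac{\p(\Delta_1(F))_{ij}}{\p f_{k\ell}}\right),
\end{equation*}
and $|d_j-d_i|^{-1}<3/\eta$ together with (\ref{eq:dDelta2F}), (\ref{eq:dDelta1F}) and $|d_j|\le 2\|A\|_2$ yields (\ref{eq:ffbound2}). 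For $\frac{\p\hat{d}_i}{\p f_{k\ell}}$ I would apply the quotient rule to (\ref{eq:hatdi}), bounding the numerator derivative by (\ref{eq:dDelta2F}), the denominator derivative by (\ref{eq:dDelta1F}), the numerator $|s_{ii}+(\Delta_2(F,D))_{ii}|\le\tfrac{27}{20}\|A\|_2$ by (\ref{eq:sijbound}) and (\ref{eq:Delta2bound}), and the denominator from below by $\tfrac{33}{40}$; collecting constants gives (\ref{eq:dfbound}). Finally, the denominator of (\ref{eq:hatdi}) is independent of $D$, so $\frac{\p\hat{d}_i}{\p d_k}=\bigl(1-r_{ii}-(\Delta_1(F))_{ii}\bigr)^{-1}\frac{\p(\Delta_2(F,D))_{ii}}{\p d_k}$, and bounding the first factor by $\tfrac{40}{33}$ and the second by (\ref{eq:dDelta2F2}), then linearising, gives (\ref{eq:ddbound}).

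The one genuinely delicate case --- the step I expect to demand the most care --- is $\frac{\p\hat{f}_{ij}}{\p d_k}$ for $\mathcal{K}(i)\ne\mathcal{K}(j)$, because $d_k$ then appears both in the numerator
\begin{equation*}
N_{ij}\equiv s_{ij}+(\Delta_2(F,D))_{ij}+d_j\bigl(r_{ij}+(\Delta_1(F))_{ij}\bigr)
\end{equation*}
of (\ref{eq:hatfij1}) and in the denominator $d_j-d_i$, so the quotient rule produces a second term carrying $(d_j-d_i)^2$ in the denominator. Here I would establish $\bigl|\frac{\p N_{ij}}{\p d_k}\bigr|\le 4\|F\|_2^2+|r_{ij}|+|(\Delta_1(F))_{ij}|\le\tfrac{43}{10}\|F^*\|_2$ from (\ref{eq:dDelta2F2}), (\ref{eq:rijbound}) and (\ref{eq:Delta1bound}), together with $|N_{ij}|\le\tfrac{23}{2}\|A\|_2\|F^*\|_2$ from (\ref{eq:sijbound}), (\ref{eq:Delta2bound}), (\ref{eq:rijbound}), (\ref{eq:Delta1bound}) and $|d_j|\le 2\|A\|_2$; then, inserting $|d_j-d_i|^{-1}<3/\eta$ and $\bigl|\frac{\p(d_j-d_i)}{\p d_k}\bigr|\le 1$ into
\begin{equation*}
\left|\frac{\p\hat{f}_{ij}}{\p d_k}\right|\le\frac{|\p N_{ij}/\p d_k|}{|d_j-d_i|}+\frac{|N_{ij}|}{|d_j-d_i|^{2}},
\end{equation*}
one obtains (\ref{eq:fdbound2}). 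Everything else is bookkeeping of the numerical constants ($\tfrac{33}{40}$, $\tfrac{27}{20}$, $\tfrac{9600}{121}$, $\tfrac{43}{10}$, $\tfrac{69}{2}$), which I expect to be the most error-prone, but least conceptual, part of the proof.
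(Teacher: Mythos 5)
Your proposal is correct and follows essentially the same route as the paper: differentiate the closed-form expressions (\ref{eq:hatdi})--(\ref{eq:hatfij2}) by the quotient rule, bound each piece using the preparatory Lemma together with $\|F\|_2<2\|F^*\|_2<\tfrac{1}{10}$, $\|D\|_2\le 2\|A\|_2$, $|d_j-d_i|\ge\eta-2\delta>\eta/3$, and $|1-r_{ii}-(\Delta_1(F))_{ii}|\ge\tfrac{33}{40}$, then absorb quadratic factors into linear ones via $\|F^*\|_2<\tfrac{1}{20}$. Your intermediate bounds $|\p N_{ij}/\p d_k|\le\tfrac{43}{10}\|F^*\|_2$ and $|N_{ij}|\le\tfrac{23}{2}\|A\|_2\|F^*\|_2$ reproduce exactly the numbers the paper implicitly computes when splitting the quotient-rule terms, so the bookkeeping checks out.
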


\begin{proof}
Note that $\|F\|_2 < \|F^*\|_2+\delta < 2\|F^*\|_2 < \frac{1}{10}$ from the assumption. First, consider the case of $\mathcal{K}(i)=\mathcal{K}(j)$. We have from (\ref{eq:hatfij2}) and (\ref{eq:dDelta1F}),
\begin{align}
\left|\frac{\p\hat{f}_{ij}}{\p f_{k\ell}}\right|
&\le \frac{1}{2}\left|\frac{\p}{\p f_{k\ell}}(\Delta_1(F))_{ij}\right| \nonumber \\
&\le \frac{1}{2}\cdot 9\|F\|_2 \le 9\|F^*\|_2.
\end{align}
Moreover, (\ref{eq:fdbound1}) holds naturally since $\hat{f}_{ij}$ does not depend on $\{d_k\}_{k=1}^n$ in this case.

Second, consider the case of $\mathcal{K}(i)\ne\mathcal{K}(j)$. Differentiating (\ref{eq:hatfij1}) with respect to $f_{k\ell}$ and using (\ref{eq:dDelta1F}) and (\ref{eq:dDelta2F}) gives
\begin{align}
\left|\frac{\p\hat{f}_{ij}}{\p f_{k\ell}}\right|
&\le \frac{1}{|d_j-d_i|}\left\{\left|\frac{\p}{\p f_{k\ell}}(\Delta_2(F,D))_{ij}\right|\right. \nonumber \\
& \quad\quad +\left.\left|d_j\frac{\p}{\p f_{k\ell}}(\Delta_1(F))_{ij}\right|\right\} \nonumber \\
&\le \frac{1}{\eta-2\delta}\left(9\|D\|_2\|F\|_2+\|D\|_2\cdot9\|F\|_2\right) \nonumber \\
&\le \frac{216}{\eta}\|A\|_2\|F^*\|_2,
\end{align}
where we used $\eta-2\delta > \eta-\frac{2}{3}\eta = \frac{1}{3}\eta$ in the last inequality. 
On the other hand, Differentiating (\ref{eq:hatfij1}) with respect to $d_k$ gives
\begin{align}
\left|\frac{\p\hat{f}_{ij}}{\p d_k}\right|
&\le \frac{1}{|d_j-d_i|}\left\{\left|\frac{\p}{\p d_k}(\Delta_2(F,D))_{ij}\right|\right. \nonumber \\
& \quad\quad +\left.\left|\frac{\p}{\p d_k}\left(d_j(r_{ij}+(\Delta_1(F))_{ij})\right)\right|\right\} \nonumber \\
& \quad + \frac{1}{|d_j-d_i|^2}\left|\frac{\p}{\p d_k}(d_j-d_i)\right| \nonumber \\
& \quad\quad \times \left\{|s_{ij}| + |(\Delta_2(F,D))_{ij}|\right. \nonumber \\
& \quad\quad\quad +\left.|d_j(r_{ij}+(\Delta_1(F))_{ij})|\right\} \nonumber \\
&\le \frac{1}{\eta-2\delta}\left\{4\|F\|_2^2+\delta_{jk}\left(\frac{5}{2}\|F^*\|_2+\frac{1}{2}\|F\|_2\right)\right\} \nonumber \\
& \quad+ \frac{1}{(\eta-2\delta)^2}(\delta_{jk}+\delta_{ik}) \nonumber \\
& \quad\quad \times\left\{\frac{5}{2}\|A\|_2\|F^*\|_2+\frac{1}{2}\|D\|_2\|F\|_2\right.\nonumber \\ 
& \quad\quad\quad + \left.\|D\|_2\left(\frac{5}{2}\|F^*\|_2+\frac{1}{2}\|F\|_2\right)\right\} \nonumber \\
&\le \frac{3}{\eta}\left(\frac{4}{5}\|F^*\|_2+\frac{5}{2}\|F^*\|_2+\|F^*\|_2\right) \nonumber \\
& \quad+ \left(\frac{3}{\eta}\right)^2\left\{\frac{5}{2}\|A\|_2\|F^*\|_2+2\|A\|_2\|F^*\|_2\right. \nonumber \\
& \quad\quad +\left.2\|A\|_2\left(\frac{5}{2}\|F^*\|_2+\|F^*\|_2\right)\right\} \nonumber \\
&\le \frac{3}{\eta}\left(\frac{43}{10}+\frac{69}{2}\cdot\frac{\|A\|_2}{\eta}\right)\|F^*\|_2.
\end{align}
Now we consider the partial derivative of $\hat{d}_i$.
\begin{align}
\left|\frac{\p\hat{d}_i}{\p f_{k\ell}}\right|
&\le \frac{1}{|1-r_{ii}-(\Delta_1(F))_{ii}|}\left|\frac{\p}{\p f_{k\ell}}(\Delta_2(F,D))_{ii}\right| \nonumber \\
& \quad + \frac{1}{|1-r_{ii}-(\Delta_1(F))_{ii}|^2} \nonumber \\
& \quad\quad \times \left|\frac{\p}{\p f_{k\ell}}(\Delta_1(F))_{ii}\left\{s_{ii}+(\Delta_2(F,D))_{ii}\right\}\right| \nonumber \\
&\le \frac{1}{1-\frac{5}{2}\cdot\frac{1}{20}-\frac{1}{20}}\cdot 9\|D\|_2\|F\|_2 \nonumber \\
& \quad + \frac{1}{\left(1-\frac{5}{2}\cdot\frac{1}{20}-\frac{1}{20}\right)^2} \nonumber \\
& \quad\quad \times 9\|F\|_2\left(\frac{5}{4}\|A\|_2+\frac{1}{2}\|D\|_2\|F\|_2\right) \nonumber \\
&\le \frac{40}{33}\cdot 9\cdot 4\|A\|_2\|F^*\|_2 \nonumber \\
& \quad + \left(\frac{40}{33}\right)^2\cdot 18\|F^*\|_2\left(\frac{5}{4}\|A\|_2+\|A\|_2\cdot\frac{1}{10}\right) \nonumber \\
&= \frac{9600}{121}\|A\|_2\|F^*\|_2.
\end{align}
Finally,
\begin{align}
\left|\frac{\p\hat{d}_i}{\p d_k}\right|
&\le \frac{1}{|1-r_{ii}-(\Delta_1(F))_{ii}|}\left|\frac{\p}{\p d_k}(\Delta_2(F,D))_{ii}\right| \nonumber \\
&\le \frac{1}{1-\frac{5}{2}\cdot\frac{1}{20}-\frac{1}{20}}\cdot 4\|F\|_2^2 \nonumber \\
&\le \frac{40}{33}\cdot 4\cdot 2\|F^*\|_2\cdot\frac{1}{10} = \frac{32}{33}\|F^*\|_2.
\end{align}
\end{proof}

Since the absolute values of the derivatives in Lemma \ref {Lemma_bound_derivatives} can be made arbitrarily small by decreasing $\|F^*\|_2$, they can be made simultaneously smaller than, say, $\frac{1}{2}\frac{1}{n^2+n}$. Then, $\left\|\frac{\partial\mathcal{F}}{\partial{\bm z}}\right\|_2 \le \left\|\frac{\partial\mathcal{F}}{\partial{\bm z}}\right\|_F < \frac{1}{2}$ and $\mathcal{F}$ becomes a contraction mapping on $B_{\delta}({\bm z}^*)$. Hence, {\bf P'} has a unique solution in $B_{\delta}({\bm z}^*)$, which is also a solution to {\bf P}. Thus, we arrive at the main theorem of this paper.
\begin{Thm}
Suppose that the 2-norm of $F^*$ defined in Lemma \ref{Theorem_unique_F} is sufficiently small (that is, $\tilde{X}$ is sufficiently close to an eigenvector matrix of $A$) and choose $\delta$ so that $\delta<\min\{\frac{\eta}{3},\|F^*\|_2\}$. Then, if we solve problem {\bf P'} in an open ball $B_{\delta}({\bm z}^*)$, we obtain the solution of problem {\bf P}.
\end{Thm}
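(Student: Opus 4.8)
The plan is to assemble the machinery of Sections~3.1 and~3.2 into a one-shot application of Banach's fixed-point theorem; almost all of the real work has already been done in the two lemmas, so the proof of the theorem is the bookkeeping that glues them together. First I would record the reduction made in Section~3.1: it is enough to show that, once $\|F^*\|_2$ is small enough, there is a radius $\delta$ with $\delta<\min\{\tfrac{\eta}{3},\|F^*\|_2\}$ for which problem~\textbf{P'} has a \emph{unique} solution in $B_\delta(\bm z^*)$. Indeed any solution of \textbf{P} is also a solution of \textbf{P'}, and the vector $\bm z^*$ attached to the pair $(F^*,D^*)$ of Theorem~\ref{Theorem_unique_F} lies in $B_\delta(\bm z^*)$; so if \textbf{P'} has only one solution in that ball, that solution must equal $\bm z^*$, which is precisely the claim that solving \textbf{P'} near $\bm z^*$ returns the solution of \textbf{P}.

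To obtain this uniqueness I would apply Banach's theorem to the map $\mathcal F$ defined by (\ref{eq:hatdi}), (\ref{eq:hatfij1}) and (\ref{eq:hatfij2}). The index sets $I_1,\dots,I_m$, and hence the labelling $\mathcal K$, are fixed once and for all --- they depend only on $A$ and $\tilde X$, by Theorem~\ref{Theorem_unique_F} and the discussion preceding it --- so $\mathcal F$ is a genuine, smooth (rational) map on the slightly enlarged open ball $U$ of Section~3.2. There the denominators $d_j-d_i$ with $\mathcal K(i)\ne\mathcal K(j)$ stay bounded away from $0$ because $\delta<\eta/3<\eta/2$, and $\|F\|_2<1$, so $\mathcal F$ is well defined; and a fixed point of $\mathcal F$ lying in $B_\delta(\bm z^*)$ is exactly a solution of \textbf{P'} in that ball. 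It therefore suffices to check that $\mathcal F$ is a contraction of $B_\delta(\bm z^*)$ into itself.

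For the contraction estimate I would use the Corollary to the Banach-space mean value theorem: for $\bm z_1,\bm z_2\in B_\delta(\bm z^*)$, $\|\mathcal F(\bm z_1)-\mathcal F(\bm z_2)\|\le\|\bm z_1-\bm z_2\|\,\sup_{\bm z\in B_\delta(\bm z^*)}\|\partial\mathcal F/\partial\bm z\|_2$, together with $\|\partial\mathcal F/\partial\bm z\|_2\le\|\partial\mathcal F/\partial\bm z\|_F$, so that it is enough to bound each of the $(n^2+n)^2$ entries of the Jacobian. That is precisely the content of Lemma~\ref{Lemma_bound_derivatives}: under $\|F^*\|_2<\tfrac{1}{20}$ and $\delta<\min\{\tfrac{\eta}{3},\|F^*\|_2\}$, each entry $\partial\hat f_{ij}/\partial f_{k\ell}$, $\partial\hat f_{ij}/\partial d_k$, $\partial\hat d_i/\partial f_{k\ell}$ or $\partial\hat d_i/\partial d_k$ is either identically zero or bounded by a constant depending only on $n$, $\eta$ and $\|A\|_2$ times $\|F^*\|_2$. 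Since $n$, $\eta$ and $\|A\|_2$ are fixed, shrinking $\|F^*\|_2$ makes all of these bounds simultaneously smaller than $\tfrac{1}{2}\cdot\tfrac{1}{n^2+n}$, hence $\|\partial\mathcal F/\partial\bm z\|_F<\tfrac{1}{2}$ on $B_\delta(\bm z^*)$. This gives property (ii) with $c=\tfrac{1}{2}$, and property (i), that $\mathcal F(B_\delta(\bm z^*))\subseteq B_\delta(\bm z^*)$, follows from (ii) and the fact that $\bm z^*$ is a fixed point of $\mathcal F$, exactly as shown in Section~3.2. Banach's theorem then produces a unique fixed point of $\mathcal F$ in $B_\delta(\bm z^*)$, i.e.\ a unique solution of \textbf{P'} there, and the reduction of the first paragraph finishes the proof.

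I do not expect any genuine obstacle beyond careful accounting: the only delicate points are already isolated in the two lemmas, namely that $\delta$ must be kept a fixed fraction of $\eta$ away from $\eta/2$ (here $\delta<\eta/3$, so that the factors $1/(\eta-2\delta)$ and $1/(\eta-2\delta)^2$ coming from the quotients in (\ref{eq:hatfij1}) do not blow up), and that $\delta<\|F^*\|_2$ keeps $\|F\|_2<2\|F^*\|_2<\tfrac{1}{10}$, which is what the lemmas require. With those in hand, ``$\|F^*\|_2$ sufficiently small'' is exactly the statement that the finitely many constants multiplying $\|F^*\|_2$ in Lemma~\ref{Lemma_bound_derivatives} can be overwhelmed, and the theorem follows.
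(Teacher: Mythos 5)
Your proposal is correct and follows essentially the same route as the paper: reduce to uniqueness of the solution of \textbf{P'} in $B_\delta({\bm z}^*)$, establish the contraction property via the Banach-space mean value theorem and the Frobenius-norm bound on the Jacobian entries from Lemma~\ref{Lemma_bound_derivatives}, derive the self-mapping property from the contraction constant and the fact that ${\bm z}^*$ is a fixed point, and invoke Banach's fixed-point theorem. You also correctly identify the roles of the two side conditions $\delta<\eta/3$ (keeping $|d_j-d_i|$ bounded below) and $\delta<\|F^*\|_2$ (keeping $\|F\|_2<1/10$ so that Lemma~3.3 applies), which is exactly how the paper glues the lemmas together.
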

This gives a justification for solving {\bf P'} instead of {\bf P}. At the same time, this gives a rationale for Ogita-Aishima's modified algorithm for multiple eigenvalues, which solves the problem {\bf P'} approximately by ignoring the second order terms $\Delta_1(F^*)$ and $\Delta_2(F,D)$ in (\ref{eq:fij8}) and (\ref{eq:fij11}), respectively.

\section*{Acknowledgment}
This study is supported by JSPS KAKENHI Grant Numbers 17H02828, 17K19966 and 19KK02555.

\references


\begin{thebibliography}{99}
\bibitem{Ogita18}
T.~Ogita and K.~Aishima,
Iterative refinement for symmetric eigenvalue decomposition,
Japan J.~Indust.~Appl.~Math,~{\bf 35} (2018), 1007--1035.

\bibitem{Ogita19}
T.~Ogita and K.~Aishima,
Iterative refinement for symmetric eigenvalue decomposition II: clustered eigenvalues,
Japan J.~Indust.~Appl.~Math,~{\bf 36} (2019), 435--459.

\bibitem{Shiroma19}
K.~Shiroma, K.~Kudo and Y.~Yamamoto,
An eigendecomposition tracking method for real symmetric matrices
based on Ogita-Aishima's eigenvector refinement algorithm (in Japanese),
Transactions of the Japan Society for Industrial and Applied Mathematics, {\bf 29} (2019), 78--120.

\bibitem{Cook94}
J.~D.~ Cook,
Differentiation in Banach spaces, \\
https://www.johndcook.com\\
/Differentiation\_in\_Banach\_spaces.pdf.


\end{thebibliography}
\end{document}